\documentclass[a5paper, 10pt]{article}

\usepackage{cite, amsmath, amssymb, booktabs, lastpage, graphicx}
\usepackage[hidelinks]{hyperref}

\usepackage{geometry}
\usepackage{setspace}

\usepackage{amsthm}

\theoremstyle{plain}

\newtheorem{theorem}{Theorem}

\theoremstyle{remark}

\theoremstyle{definition}

\newtheorem{example}{Example}

\usepackage{graphicx}
\makeatletter
\renewcommand{\maketitle}{
	\begin{center}

		\baselineskip=0.30in
		{\Large\bfseries \@title} \par
		\vspace{10mm}
		\baselineskip=0.2in
		{\large\bfseries \@author}\par
		\vspace{10mm}
		{\it \@address} \par
		{\small\tt \@email} \par
		\vspace{10mm}
		
	\end{center}
	\vspace{3mm}
}
\newcommand{\address}[1]{\def\@address{#1}}
\newcommand{\email}[1]{\def\@email{#1}}

\makeatother

\usepackage{fancyhdr}

\usepackage[margin=1cm,%
			font=footnotesize,%
			format=hang,%
			labelsep=period,%
			labelfont=bf]{caption}
\allowdisplaybreaks
\title{Some New Results on Energy of Graphs with Self Loops}
\author{Kalpesh M. Popat$^{a}$, Kunal R. Shingala$^b$}

\address{$^a$Atmiya University, Rajkot-360005, Gujarat, India\\
	$^b$Atmiya University, Rajkot-360005, Gujarat, India\\
    }

\email{kalpeshmpopat@gmail.com, shingalakunal999@gmail.com}

\date{\today}

\begin{document}

\maketitle

\begin{abstract}
The graph $G_\sigma$ is obtained from graph $G$ by attaching self loops on $\sigma$ vertices. The energy $ E(G_\sigma)$ of the graph $G_\sigma$ with order $n$ and eigenvalues $\lambda_1,\lambda_2,\dots,\lambda_n$ is defined as $ E(G_\sigma)= \displaystyle \sum_{i=1}^n\left|\lambda_i-\dfrac{\sigma}{n}\right| $. It has been proved that if $\sigma=0\; or\; n$ then $ E(G)=E(G_\sigma) $. The obvious question arise: Are there any graph such that $E(G)=E(G_\sigma)$ and 0$<\sigma<n$? We have found an affirmative answer of this question and contributed a graph family which satisfies this property.
\end{abstract}

\onehalfspacing

\section{Introduction}
For standard terminology and notations related to graph theory, we follow Balakrishnan and Ranganathan \cite{b1} while any terms related to algebra we depend on Lang \cite{b2}.
\par An undirected graph without multiple edges and self-loops is called a simple graph. The adjacency matrix $ A(G) $ of a simple graph $ G $ with vertex set $ \{v_1, v_2, \dots, v_n\} $ is $ n $-ordered symmetric matrix $ A(G)=[a_{ij}] $ such that,
$$a_{ij}=\left\{  \begin{array}{ll}
1&\;\;\; $if\;the\;vertex\;$v_i$\;is\;adjacent\;with\;vertex\;$v_j$,$\\[2mm]
0&\;\;\;$if\;the\;vertex\;$v_i$\;is\;not\;adjacent\;with\;vertex\;$v_j$.$
\end{array}
\right.
$$
\par The characteristic polynomial of the adjacency matrix $ A(G) $ is denoted by $\phi(G:x)$. The roots of characteristic polynomial $\lambda_1, \lambda_2, \dots, \lambda_n$ are called the eigenvalues of graph $G$. The energy $E(G)$ of graph $G$ is developed by Gutman \cite{1} in 1978 as $E(G)=\displaystyle \sum_{i=1}^n\left|\lambda_i\right| $.
\par This graph energy is an emerging subject for a researchers of applied mathematics and mathematical chemistry. A brief account of graph energy of simple graphs can be found in \cite{8,4,9} as well as in the books \cite{b4,b3}. The variants of graph energy can be found in \cite{6,5,7}.
\par Recently the concept of energy of graphs with self-loops is open-up by Gutman \textit{et al.} \cite{2}. Let $G_\sigma$ be the graph obtained from graph $G$ by attaching self loops on $\sigma$ chosen vertices. The adjacency matrix $ A(G_\sigma) $ of graph $G_\sigma$ is an $n \times n$ symmetric matrix such that $A(G_\sigma)=A(G)+I_\sigma$, where $I_\sigma$ is a square matrix of order $n$ with exactly $\sigma$ ones on the main diagonal and all other entries are zero. The eigenvalues of $A(G_\sigma)$ are denoted by $\lambda_1(G_\sigma),\lambda_2(G_\sigma), \cdots, \lambda_n(G_\sigma)$. The energy $E(G_\sigma)$
 of $G_\sigma$ is  
$$E(G_\sigma)=\sum_{i=1}^{n}\left|\lambda_i(G_\sigma)-\frac{\sigma}{n}\right|$$
\par Gutman \textit{et al.} have \cite{2} conjectured that for any graph $G$ of order $n$, $E(G)<E(G_\sigma)$. Irena \textit{et al.} \cite{3} have disproved this conjuncture by showing examples of graphs such that $E(G)>E(G_\sigma)$. It has been shown that \cite{2} if $\sigma=0\;or\;n$ then $E(G)=E(G_\sigma)$. In the present paper we have obtained a graph family such that $E(G)=E(G_\sigma)$ and $0<\sigma<n$.

\section{Main Results}
\begin{theorem}
Let $G$ be the simple graph of order $n$ with eigenvalues $\lambda_1,\lambda_2,\cdots \\,\lambda_n$ and $G^l$ be the graph obtained from $G$ by adding a loop on each vertex of $G$ then $E((G \cup G^l)_n)=2E(G)$, if $|\lambda_i| \ge \frac{1}{2}$, for each $i=1,2,\cdots,n$. 
\end{theorem}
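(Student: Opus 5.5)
The graph $(G\cup G^l)_n$ has order $2n$ with exactly $n$ loops, one on each vertex of the copy $G^l$. So the shift in the definition of energy is $\sigma/(2n)=n/(2n)=\tfrac12$. The plan is to compute the spectrum of the disjoint union explicitly and then evaluate the sum of absolute deviations from $\tfrac12$.

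\textbf{Step 1: the spectrum.} The adjacency matrix of $(G\cup G^l)_n$ is block diagonal,
$$\begin{pmatrix} A(G) & 0\\ 0 & A(G)+I_n\end{pmatrix}.$$
Hence its spectrum is the multiset union of the spectrum of $A(G)$, namely $\lambda_1,\dots,\lambda_n$, and the spectrum of $A(G)+I_n$, namely $\lambda_1+1,\dots,\lambda_n+1$. This gives
$$E((G\cup G^l)_n)=\sum_{i=1}^n\left|\lambda_i-\tfrac12\right|+\sum_{i=1}^n\left|\lambda_i+\tfrac12\right|.$$

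\textbf{Step 2: termwise evaluation.} For each $i$ we use the hypothesis $|\lambda_i|\ge\tfrac12$ and split into two cases.
\begin{itemize}
\item If $\lambda_i\ge\tfrac12$, both $\lambda_i-\tfrac12$ and $\lambda_i+\tfrac12$ are nonnegative. Their absolute values therefore sum to $2\lambda_i=2|\lambda_i|$.
\item If $\lambda_i\le-\tfrac12$, both quantities are nonpositive. Their absolute values sum to $-2\lambda_i=2|\lambda_i|$.
\end{itemize}
Summing over $i$ gives $2\sum_i|\lambda_i|=2E(G)$.

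\textbf{Main obstacle.} The only real point is that the hypothesis excludes eigenvalues in the open interval $(-\tfrac12,\tfrac12)$. For such an eigenvalue, $|\lambda_i-\tfrac12|+|\lambda_i+\tfrac12|=1>2|\lambda_i|$, so the equality would fail. The case split above is exactly where the hypothesis is used.

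A final remark ties this to the abstract. The union $G\cup G$ has energy $2E(G)$, while here $\sigma=n$ lies strictly between $0$ and the order $2n$. So the theorem supplies the promised family with $E(H)=E(H_\sigma)$ for $H=G\cup G$ and $0<\sigma<2n$.
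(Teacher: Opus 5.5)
Your proof is correct and follows the same route as the paper. Both use the block-diagonal adjacency matrix to get the spectrum $\{\lambda_i\}\cup\{\lambda_i+1\}$, use the shift $\sigma/(2n)=\tfrac12$, and show termwise that $|\lambda_i-\tfrac12|+|\lambda_i+\tfrac12|=2|\lambda_i|$ under the hypothesis. The only cosmetic difference is that you split on $\lambda_i\ge\tfrac12$ versus $\lambda_i\le-\tfrac12$ rather than on the sign of $\lambda_i$, which is equivalent given $|\lambda_i|\ge\tfrac12$.
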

\noindent \textbf{\emph{Proof}}: Let $H_n=G \cup G^l$. The graph $H_n$ contains $2n$ vertices and $n$ loops.  The adjacency matrix of $H$ is given by:
\begin{align*}
A(H_n)= &
\begin{bmatrix}
A(G) & 0 \\
0 & A(G)+I_n
\end{bmatrix}   
\end{align*}
The characteristic polynomial of above matrix is given by:
\begin{align*}
\phi(H_n:x)= &
\begin{vmatrix}
xI-A(G) & 0 \\
0 & xI-(A(G)+I_n)
\end{vmatrix}   
\end{align*}
It follows that if $\lambda_1,\lambda_2,\cdots,\lambda_n$ are eigenvalues of $A$ then,
\begin{align*}
\phi(H_n:x) & =  \prod_{i=1}^{n} (x-\lambda_i)(x-(\lambda_i+1) 
\end{align*}
The roots of above characteristic polynomial are:
$$x=\lambda_i,x=\lambda_i+1$$, for each $i=1,2,\cdots,n$ \\
Here, 
\begin{align}
E(H_n) & =  \sum_{i=1}^{n} \left( \left| \lambda_i-\frac{n}{2n} \right|+\left| \lambda_i+1-\frac{n}{2n} \right| \right) \nonumber \label{eq1} \\
 & =  \sum_{i=1}^{n} \left( \left| \lambda_i-\frac{1}{2} \right|+\left| \lambda_i+\frac{1}{2} \right| \right)  
\end{align}
Suppose that $\left| \lambda_i \right| \geq \frac{1}{2}$, for all $1 \leq i \leq n$ then 
$$
\left| \lambda_i - \frac{1}{2}\right|=
\left\{\begin{array}{ll}
\left| \lambda_i\right| - \dfrac{1}{2},\, &\text{if}\; \lambda_i \ge 0\\\\
\left| \lambda_i\right| + \dfrac{1}{2},\, &\text{if}\; \lambda_i < 0
\end{array}
\right. 
$$
and
$$
\left| \lambda_i + \frac{1}{2}\right|=
\left\{\begin{array}{ll}
\left| \lambda_i\right| + \dfrac{1}{2},\, \text{if}\; \lambda_i \ge 0\\ \\
\left| \lambda_i\right| - \dfrac{1}{2},\, \text{if}\; \lambda_i < 0
\end{array}
\right.
$$
Therefore, from equation \ref{eq1}
\begin{align*}
E(H_n) & =  \sum_{i=1}^{n} \left( \left| \lambda_i-\frac{1}{2} \right|+\left| \lambda_i+\frac{1}{2} \right| \right) \\
& = \sum_{\lambda_i\ge 0} \left( \left| \lambda_i-\frac{1}{2} \right|+\left| \lambda_i+\frac{1}{2} \right| \right)+\sum_{\lambda_i< 0} \left( \left| \lambda_i-\frac{1}{2} \right|+\left| \lambda_i+\frac{1}{2} \right| \right) \\
& = \sum_{\lambda_i\ge 0} \left(  |\lambda_i|-\frac{1}{2}+|\lambda_i|+\frac{1}{2}  \right)+\sum_{\lambda_i< 0} \left( |\lambda_i|+\frac{1}{2}+|\lambda_i|-\frac{1}{2}  \right) \\
 & = 2\left(\sum_{\lambda_i\ge 0}   |\lambda_i| +\sum_{\lambda_i< 0}  |\lambda_i|  \right) \\
 & = 2 \sum_{i=1}^n |\lambda_i| \\
 & =2 E(G)
\end{align*}
\begin{example} 
We now given an example of graph $G$ such that $E(G)=E(G_\sigma)$ and $0<\sigma<n$. Consider the graph $H=K_3 \cup K_3$ and $H_3=K_3 \cup K_3^l$. The graph $H_3$ contains $6$ vertices and three loops. It is known fact that $E(K_3)=4$ and hence $E(H)=E(K_3 \cup K_3)=2E(G)=2(4)=8$.  
\end{example} 
\begin{figure}[h]
\begin{center}
\includegraphics[scale=2]{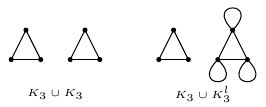}
\end{center}
\end{figure}
The adjacency matrix of $H_3$ is:
\begin{center}
A($H_3$)=
$\begin{bmatrix}
0&1&1&0&0&0\\
1&0&1&0&0&0\\
1&1&0&0&0&0\\
0&0&0&1&1&1\\
0&0&0&1&1&1\\
0&0&0&1&1&1\\
\end{bmatrix}$
\end{center}
The eigenvalues of $K_3^l$ are $3^1,2^1,(-1)^2$ and $0^2$.  \\
Hence,
\begin{center}
$E(H_3)=\left|3-\dfrac{3}{6}\right|+\left|2-\dfrac{3}{6}\right|+2 \left|-1-\dfrac{3}{6}\right|+2\left|0-\dfrac{3}{6}\right|=8$.
\end{center}
Therefore, $E(H)=E(H_3)$.
\begin{theorem}
Let $G$ be the simple graph of order $n$ with eigenvalues $\lambda_1,\lambda_2,\cdots \\,  \lambda_n$ and $G^l$ be the graph obtained from $G$ by adding a loop on each vertex of $G$. Let $p$ and $q$ be non-negative integer and $p+q=m$ then $E((pG \cup qG^l)_{qn})=mE(G)$, if $|\lambda_i| \ge max\left(\frac{p}{m},\frac{q}{m}\right)$, for each $i=1,2,\cdots,n$. 
\end{theorem}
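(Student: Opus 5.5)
The plan is to follow the proof of Theorem 1 closely. Set $H=pG\cup qG^l$, a graph on $mn$ vertices carrying exactly $qn$ loops. Its adjacency matrix is block diagonal, with $p$ copies of $A(G)$ and $q$ copies of $A(G)+I_n$. The characteristic polynomial therefore factors as
\[
\phi(H:x)=\prod_{i=1}^{n}(x-\lambda_i)^{p}\,\bigl(x-(\lambda_i+1)\bigr)^{q}.
\]
So each $\lambda_i$ is an eigenvalue with multiplicity $p$ and each $\lambda_i+1$ with multiplicity $q$. The shift in the energy is $\sigma/(mn)=qn/(mn)=q/m$. Writing $\alpha=q/m$ and $\beta=p/m$, we have $\alpha+\beta=1$ and $\lambda_i+1-\alpha=\lambda_i+\beta$. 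This gives
\[
E(H)=\sum_{i=1}^{n}\Bigl(p\,\bigl|\lambda_i-\alpha\bigr|+q\,\bigl|\lambda_i+\beta\bigr|\Bigr).
\]

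Next, treat each $i$ separately by the sign of $\lambda_i$, using the hypothesis $|\lambda_i|\ge\max(\alpha,\beta)$.

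\emph{Case $\lambda_i\ge0$.} Here $\lambda_i\ge\alpha$, so $|\lambda_i-\alpha|=|\lambda_i|-\alpha$. Also $|\lambda_i+\beta|=|\lambda_i|+\beta$. The $i$-th summand is
\[
m|\lambda_i|-p\alpha+q\beta=m|\lambda_i|-\tfrac{pq}{m}+\tfrac{qp}{m}=m|\lambda_i|.
\]

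\emph{Case $\lambda_i<0$.} Here $|\lambda_i-\alpha|=|\lambda_i|+\alpha$. Since $|\lambda_i|\ge\beta$, we also get $|\lambda_i+\beta|=|\lambda_i|-\beta$. The summand is
\[
m|\lambda_i|+p\alpha-q\beta=m|\lambda_i|.
\]

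Summing over $i$ then gives $E(H)=m\sum_i|\lambda_i|=mE(G)$.

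The only delicate point is the bookkeeping that makes the cross terms cancel. The weight $p$ multiplies $\alpha=q/m$ and the weight $q$ multiplies $\beta=p/m$, so both cross terms equal $pq/m$ and cancel. One must also check that the hypothesis is exactly what fixes the sign inside each absolute value. The positive side needs $|\lambda_i|\ge q/m$ and the negative side needs $|\lambda_i|\ge p/m$; the $\max$ condition covers both.

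The degenerate cases $p=0$ or $q=0$ follow from the same computation, and they agree with the known facts for $\sigma=0$ and $\sigma=n$. Taking $p=q=1$ recovers Theorem 1.
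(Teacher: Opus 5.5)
Your proposal is correct and follows the paper's proof essentially step for step. You use the same block-diagonal adjacency matrix, the eigenvalues $\lambda_i$ (multiplicity $p$) and $\lambda_i+1$ (multiplicity $q$), the shift $q/m$, and a sign split in which the cross terms $\pm pq/m$ cancel; the only difference is cosmetic, since the paper separates the cases $p\neq q$ and $p=q$ while your single computation (using $\lambda_i\ge q/m$ on the nonnegative side and $|\lambda_i|\ge p/m$ on the negative side) handles both at once.
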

\noindent \textbf{\emph{Proof}}: Let $H_{qn}=pG \cup qG^l$. The graph $H_{qn}$ contains $mn$ vertices and $qn$ loops.  The adjacency matrix of $H_{qn}$ is given by:

\begin{tiny}
\begin{align*}
A(H_{qn})= &
\begin{bmatrix}
A(G) & 0  & \cdots & 0 & 0  & 0 & \cdots & 0 \\
0 & A(G)  & \cdots & 0 & 0  & 0 & \cdots & 0 \\
\vdots & \vdots  & \cdots & \vdots &  \vdots  & \vdots &  \cdots & \vdots \\
0 & 0  & \cdots & A(G) & 0  & 0 & \cdots & 0 \\
0 & 0  & \cdots & 0 & A(G)+I_n  & 0 & \cdots & 0 \\
0 & 0  & \cdots & 0 & 0  & A(G)+I_n & \cdots & 0 \\
\vdots & \vdots  & \cdots & \vdots &  \vdots  & \vdots &  \cdots & \vdots \\
0 & 0  & \cdots & 0 & 0  & 0 & \cdots & A(G)+I_n \\
\end{bmatrix}   
\end{align*}
\end{tiny}
The characteristic polynomial of above matrix is given by:
\begin{tiny}
\begin{align*}
\phi(H_{qn}:x)= &
\begin{vmatrix}
xI-A(G)  & \cdots & 0 & 0  &  \cdots & 0 \\
\vdots   & \ddots & \vdots &  \vdots  & \vdots &  0  \\
0 & \cdots & xI-A(G) & 0  & \cdots  & 0 \\
0 & \cdots  & 0  & xI-(A(G)+I_n)   & \cdots & 0 \\
\vdots & \vdots  & \vdots & \vdots &  \ddots  & \vdots  \\
0 & \cdots  & 0 & 0 & \cdots   & xI-(A(G)+I_n) \\
\end{vmatrix}  
\end{align*}
\end{tiny}
It follows that if $\lambda_1,\lambda_2,\cdots,\lambda_n$ are eigenvalues of $A$ then,
\begin{align*}
\phi(H_{qn}:x) & =  \prod_{i=1}^{n}(x-\lambda_i)^p(x-(\lambda_i+1))^q 
\end{align*}
The roots of above characteristic polynomial are:
$$x=\lambda_i(p-times),\,x=\lambda_i+1(q-times)$$, for each $i=1,2,\cdots,n$ \\
Here, 
\begin{align}
E(H_{qn}) & =  \sum_{i=1}^{n} \left(p \left| \lambda_i-\frac{qn}{mn} \right|+q\left| \lambda_i+1-\frac{qn}{mn} \right| \right) \nonumber  \\
 & =  \sum_{i=1}^{n} \left( p\left| \lambda_i-\frac{q}{m} \right|+q\left| \lambda_i+\frac{m-q}{m} \right| \right)\nonumber \\
 & =  \sum_{i=1}^{n} \left( p\left| \lambda_i-\frac{q}{m} \right|+q\left| \lambda_i+\frac{p}{m} \right| \right)     \label{eq2}
\end{align}
$\bf Case-i:$ Either $p > q$ \textbf{or} $p < q$ \\[2mm]
$\Rightarrow$ $max\left(\dfrac{p}{m},\dfrac{q}{m}\right)=\dfrac{p}{m}$ \textbf{or} $max\left(\dfrac{p}{m},\dfrac{q}{m}\right)=\dfrac{q}{m}$\\[2mm]
If $max\left(\dfrac{p}{m},\dfrac{q}{m}\right)=\dfrac{p}{m}$ then we suppose $\left| \lambda_i \right| \geq \dfrac{p}{m}>\dfrac{q}{m}$ \textbf{and}\\ [2mm]if $max\left(\dfrac{p}{m},\dfrac{q}{m}\right)=\dfrac{q}{m}$ then we suppose $\left| \lambda_i \right| \geq \dfrac{q}{m}>\dfrac{p}{m}$, for all $1 \leq i \leq n$.\\ [2mm]
Therefore,   
$$
\left| \lambda_i - \frac{q}{m}\right|= 
\left\{\begin{array}{ll}
\left| \lambda_i\right| - \dfrac{q}{m},\, &\text{if}\; \lambda_i \geq 0\\\\
\left| \lambda_i\right| + \dfrac{q}{m},\, &\text{if}\; \lambda_i < 0
\end{array}
\right.  
$$
and
$$
\left| \lambda_i + \frac{p}{m}\right|= 
\left\{\begin{array}{ll}
\left| \lambda_i\right| + \dfrac{p}{m},\, &\text{if}\; \lambda_i \ge 0\\\\
\left|\lambda_i\right| - \dfrac{p}{m},\, &\text{if}\; \lambda_i < 0
\end{array}
\right.
$$
Hence, from equation \ref{eq2}
\begin{align*}
E(H_{qn}) & =  \sum_{i=1}^{n} \left( p\left| \lambda_i-\frac{q}{m} \right|+q\left| \lambda_i+\frac{p}{m} \right| \right) \\
& = \sum_{\lambda_i\ge 0} \left( p\left| \lambda_i-\frac{q}{m} \right|+q\left| \lambda_i+\frac{p}{m} \right| \right)+\sum_{\lambda_i< 0} \left( p\left| \lambda_i-\frac{q}{m} \right|+q\left| \lambda_i+\frac{p}{m} \right| \right) \\
& = \sum_{\lambda_i\ge 0} \left(  p|\lambda_i|-\frac{pq}{m}+q|\lambda_i|+\frac{pq}{m}  \right)+\sum_{\lambda_i< 0} \left( p|\lambda_i|+\frac{pq}{m}+q|\lambda_i|-\frac{pq}{m}  \right) \\
 & = p\left(\sum_{\lambda_i\ge 0}   |\lambda_i| +\sum_{\lambda_i< 0}  |\lambda_i|  \right)+ q\left(\sum_{\lambda_i\ge 0}   |\lambda_i| +\sum_{\lambda_i< 0}  |\lambda_i|  \right) \\
 & = p \sum_{i=1}^n |\lambda_i|+q \sum_{i=1}^n |\lambda_i| \\
 & = p E(G)+qE(G) \\
 & =(p+q) E(G)\\
 & =m E(G)
\end{align*}
$\bf Case-ii$ If $p=q$ then we assume $\left| \lambda_i \right| \geq \frac{p}{m}$, for all $1 \leq i \leq n$ then 
$$
\left| \lambda_i - \frac{p}{m}\right|=
\left\{\begin{array}{ll}
\left| \lambda_i\right| - \dfrac{p}{m},\, &\text{if} \;\lambda_i \ge 0\\\\
\left| \lambda_i\right| + \dfrac{p}{m},\, &\text{if} \; \lambda_i < 0
\end{array}
\right.  
$$
and 
$$
\left| \lambda_i + \frac{p}{m}\right|= 
\left\{\begin{array}{ll}
\left| \lambda_i\right| + \dfrac{p}{m},\, &\text{if} \;\lambda_i \ge 0\\\\
\left| \lambda_i\right| - \dfrac{p}{m},\, &\text{if} \; \lambda_i < 0
\end{array}
\right.
$$
Therefore, from equation \ref{eq2}
\begin{align*}
E(H_{qn}) & =  \sum_{i=1}^{n} \left( p\left| \lambda_i-\frac{q}{m} \right|+p\left| \lambda_i+\frac{p}{m} \right| \right) \\
& = p \left[\sum_{\lambda_i\ge 0} \left( \left| \lambda_i-\frac{p}{m} \right|+\left| \lambda_i+\frac{p}{m} \right| \right)+\sum_{\lambda_i< 0} \left( \left| \lambda_i-\frac{p}{m} \right|+\left| \lambda_i+\frac{p}{m} \right| \right) \right] \\
& = p \left[\sum_{\lambda_i\ge 0} \left(  |\lambda_i|-\frac{p}{m}+|\lambda_i|+\frac{p}{m}  \right)+\sum_{\lambda_i< 0} \left( |\lambda_i|+\frac{p}{m}+|\lambda_i|-\frac{p}{m}  \right) \right] \\
 & = p\left(\sum_{\lambda_i\ge 0}   2|\lambda_i| +\sum_{\lambda_i< 0}  2|\lambda_i|  \right)\\
 & = 2pE(G)\\
 & =(p+q) E(G)\\
 & =m E(G)
\end{align*}
\section{Concluding Remarks}
In existing literature, the energy of graphs with self loops is defined. The energy of simple graph $E(G)$ and the energy of graph with self loop $E(G_\sigma)$ are same if $\sigma=0 \ \text{or} \ n$. In the present work we have obtained graphs such that $E(G_\sigma)=E(G)$ and $0<\sigma<n$.
%\acknowledgment{Acknowledgment (if there is any) should be placed here.}
% References should be ordered alphabetically.
\singlespacing

\end{document}